\newcommand{\zerodisplayskips}{
\setlength{\abovedisplayskip}{7pt}
\setlength{\belowdisplayskip}{7pt}
\setlength{\abovedisplayshortskip}{2pt}
\setlength{\belowdisplayshortskip}{2pt}
}
\newcommand{\floor}[1]{\left\lfloor #1 \right\rfloor}
\appto{\normalsize}{\zerodisplayskips}
\appto{\small}{\zerodisplayskips}
\appto{\footnotesize}{\zerodisplayskips}
\pgfplotsset{compat=1.16}
\newtheorem{theorem}{Theorem}
\newtheorem{corollary}{Corollary}
\newtheorem{lemma}[theorem]{Lemma}
\theoremstyle{definition}
\newtheorem{remark}{Remark}
\newtheorem{definition}{Definition}
\newtheorem{conjecture}{Conjecture}
\newtheorem{prop}{Proposition}
\newcommand{\R}{\mathbb{R}}
\newcommand{\N}{\mathbb{N}}
\newcommand{\E}{\mathbb{E}}
\newcommand\norm[1]{\left\lVert#1\right\rVert}
\newcommand\innerprod[1]{\langle#1\rangle}
\newcommand{\SET}[1]{\Set{\mskip-\medmuskip #1 \mskip-\medmuskip}}
\begin{document}

\title[]{On the Structure of Bad Science Matrices}

\author[]{Alex Albors}
\address{University of Washington, Seattle, WA 98195, USA}
\email{aalbors@uw.edu}

\author[]{Hisham Bhatti}
\address{ University of Washington, Seattle, WA 98195, USA}
\email{hishamb@uw.edu}

\author[]{Lukshya Ganjoo}
\address{ University of Washington, Seattle, WA 98195, USA}
\email{lganjoo@uw.edu}

\author[]{Raymond Guo}
\address{ University of Washington, Seattle, WA 98195, USA}
\email{rpg360@uw.edu}

\author[]{Dmitriy Kunisky}
\address{ Department of Applied Mathematics and Statistics, Johns Hopkins University, MD 21211, USA}
\email{kunisky@jhu.edu}

\author[]{Rohan Mukherjee}
\address{ University of Washington, Seattle, WA 98195, USA}
\email{rbmuk@uw.edu}

\author[]{Alicia Stepin}
\address{ University of Washington, Seattle, WA 98195, USA}
\email{astepin@uw.edu}

\author[]{Tony Zeng}
\address{Department of Mathematics, University of Washington, Seattle, WA 98195, USA}
\email{txz@uw.edu}

\begin{abstract}
   The bad science matrix problem consists in finding, among all matrices $A \in \mathbb{R}^{n \times n}$ with rows having unit $\ell^2$ norm, one that maximizes
    \begin{align*}
        \beta(A) = \frac{1}{2^n} \sum_{x \in \{-1, 1\}^n} \|Ax\|_\infty.
    \end{align*}
    Our main contribution is an explicit construction of an $n \times n$ matrix $A$ showing that $\beta(A) \geq \sqrt{\log_2(n+1)}$ which is only $18\%$ smaller than the asymptotic rate. We prove that the every entry of any optimal matrix is a square root of a rational number, and we find provably optimal matrices for $n \leq 4$.
\end{abstract}

\maketitle

\section{Introduction}
The bad science matrix problem \cite{steinerberger} is concerned with understanding, for matrices $A \in \mathbb{R}^{n \times n}$ whose rows are normalized to have $\ell^2$-length $1$, the size of the quantity
$$         \beta(A) = \frac{1}{2^n} \sum_{x \in \{-1, 1\}^n} \|Ax\|_\infty.$$
While originally only defined for square matrices in \cite{steinerberger}, its definition naturally makes sense for $A \in \mathbb{R}^{m \times n}$ where one measures the $\infty$-norm of the $2^n$ vertices of the cube which are mapped to $2^n$ vertices in $\mathbb{R}^m$. We note that the problem can also be framed probabilistically, with the objective function becoming
\begin{align*}
    \beta(A) = \mathbb{E} \norm{AX}_{\infty}
\end{align*}
where $X \in \{-1, 1\}^n$ is an $n$-dimensional Rademacher random vector (i.e., with entries drawn independently and uniformly at random from $\{\pm 1\}$). We will use this notation throughout when we mention probabilistic interpretations of various quantities.
The problem is interesting for a variety of different reasons. We list four.

\subsection{Bad Science.} The name is derived from the following thought experiment: we are given a sequence of coin tosses $x \in \left\{-1,1\right\}^n$ and are trying to understand whether they come from a fair coin or not. One natural way of testing is to fix a vector $a_1 \in \mathbb{R}^n$, normalized to $\|a_1\|_2= 1$, and consider the size of the inner product $\left\langle a_1, x \right\rangle$: Hoeffding's inequality guarantees that if, the coin is fair, then the inner product should be small. Similar reasoning suggests that we could take two vectors $\|a_1\|_2= 1=\|a_2\|_2$ and consider both inner products: they should both be small and if one of them is large, that is probably indicative of the coin not being fair. We may also think of this case as two statistical tests being run. Note that, since we do not make any assumptions about $a_1$ and $a_2$, these tests need not be independent. The question now is whether it is possible to construct $n$ vectors $a_1, \dots, a_n$ corresponding to $n$ fair statistical tests such that, for a typical sequence of random coin tosses $x \in \left\{-1,1\right\}^n$, at least one of the statistical tests will typically end up with a large result even though the coin is fair. This motivates the name of ``bad science matrix problem'': the bad scientist is running a battery of (fair) tests to find an abnormal result even though the coins are not abnormal.

\subsection{Rotating the Cube.} Since the rows are normalized to have length $1$, we deduce that the Frobenius norm of the matrix is
$$ \|A\|_F^2 = \sum_{i,j=1}^{n} A_{ij}^2 = n.$$
The Frobenius norm is simultaneously the sum of the squares of the singular values of the matrix and therefore
$$ n = \sum_{i=1}^{n} \sigma_i(A)^2.$$
Geometrically, this means that the matrix cannot be too expansive.  A very simple example of matrices with this property is the identity matrix $\mbox{Id}_{n \times n}$ or, more generally, orthogonal matrices $Q \in \mathbb{R}^{n \times n}$. Given such a matrix $A$, we could now ask where it ends up sending the unit cube $\left\{-1,1\right\}^n$
$$ A \left\{-1,1\right\}^n = \left\{ \sum_{i=1}^{n} \varepsilon_i c_i: \varepsilon_i \in \left\{-1,1\right\} \right\},$$
where $c_1, \dots, c_n \in \mathbb{R}^n$ are the columns of the matrix $A$. Using $a_1, \dots, a_n \in \mathbb{R}^n$ to denote the rows of the matrix $A$, we then have
\begin{align*}
 \frac{1}{2^n} \sum_{x \in \{-1, 1\}^n} \|Ax\|_2^2 &=  \frac{1}{2^n} \sum_{x \in \{-1, 1\}^n} \sum_{i=1}^{n} \left\langle a_i, x \right\rangle^2 = \sum_{i=1}^{n} \frac{1}{2^n} \sum_{x \in \{-1, 1\}^n}  \left\langle a_i, x \right\rangle^2.
\end{align*}
For any arbitrary vector $y \in \mathbb{R}^n$, we have
\begin{align*}
      \frac{1}{2^n} \sum_{x \in \{-1, 1\}^n} \langle y, x \rangle^2 &= \E [\langle y, x \rangle^2] = \E\left[\left(\sum_{i = 1}^{n} y_i x _i\right)^2\right] = \E\left[\sum_{i, j = 1}^{n} y_i y_j x_i x_j \right] \\
      &= \sum_{i, j = 1}^{n} y_i y_j \E[x_i x_j] = \sum_{i = 1}^{n} y_i^2 \underbrace{\E[x_i^2]}_{1} + \sum_{i \neq j} y_i y_j \underbrace{\E[x_i x_j]}_{0} \\
      &= \sum_{i = 1}^{n} y_i^2 = \|y\|^2
\end{align*}
and therefore
$$  \frac{1}{2^n} \sum_{x \in \{1, 1\}^n} \|Ax\|_2^2 = \|A\|_F^2 = n.$$
One way of interpreting the results is that the normalization on the rows shows that $A$ rotates the vertices of the cube to points whose average $\ell^2$-norm remains fixed. The question is now: can these points have the property that the typical largest entry of such a vertex ends up being significantly larger than the average?

\subsection{The Koml\'os Conjecture.} Another motivation is given by the Koml\'os conjecture. It is known (see Banaszczyk \cite{bana3}) that there exists a constant $C>0$ such that for all matrices $A \in \mathbb{R}^{n \times n}$ whose \textit{columns} are of size $\leq 1$ in $\ell^2$, there always exists a sign vector $\varepsilon \in \left\{-1, 1 \right\}^n$ such that
$$ \|A \varepsilon\|_{\ell^{\infty}} \leq C \sqrt{\log{n}}.$$
The open question is whether the $\sqrt{\log{n}}$ term is necessary or whether the result remains true with a universal constant $C$. It is known that if this were true, the constant $C$ cannot be too small: Kunisky \cite{kunisky} has proven that for each $\eta > 0$ there exists a matrix $A \in \mathbb{R}^{n \times n}$ so that
$$ \forall~\varepsilon \in \left\{-1,1\right\}^n \qquad \| A \varepsilon\|_{\ell^{\infty}} \geq 1 + \sqrt{2} - \eta.$$
The bad science matrix problem is somewhat dual: one can think of the Koml\'os problem as the problems of balancing the columns of a matrix to ensure that the final sum is in a small $\ell^{\infty}$-box centered at the origin. Our problem is less a vector balancing problem and more of a `functional-balancing' question: we have $n$ functionals $\left\langle a_i, \cdot \right\rangle$ whose operator norm is 1 and ask whether they can be arranged so that a typical random $\pm 1$ is outside a large $\ell^{\infty}$-box. To further underscore the similarity, an idea of Kunisky \cite{kunisky} shown to be useful in the setting of the Koml\'os problem, will also be useful in the setting of bad science matrices.

\subsection{Extremizers.} One final possible motivation why this functional may be of intrinsic interest are the results: extremal matrices appear to have an interesting structure. We illustrate this with an example taken from \cite{steinerberger}: the currently best known example for $n=5$ is the matrix
$$ A = \frac{1}{2 \sqrt{3}} \left(
\begin{array}{ccccc}
 2 & 2 & 0 & 0 & 2 \\
 -2 & 2 & 0 & 2 & 0 \\
 -2 & 0 & 0 & -2 & 2 \\
 0 & -\sqrt{3} & \sqrt{3} & \sqrt{3} & \sqrt{3} \\
 0 & \sqrt{3} & \sqrt{3} & -\sqrt{3} & -\sqrt{3} \\
\end{array}
\right).$$
This shows that
$$  \max_{A \in \mathbb{R}^{5 \times 5}} \beta(A) \geq \frac{2+ 3\sqrt{3}}{4} \approx 1.799\dots$$
Moreover, the way this matrix acts on the $2^5 = 32$ vertices of the hypercube $\left\{-1,1\right\}^5$ is interesting: 8 of the 32 vertices are sent to vectors with maximal entry 2 (in absolute value) and the remaining 24 vertices are sent to vertices with maximal coordinate $\sqrt{3}$ (in absolute value). We do not know whether this matrix is indeed optimal. Indeed, even for relatively small dimensions, like $n=3$ or $n=4$, rigorously proving that a certain candidate matrix is indeed extremal appears to be a highly nontrivial problem.

\subsection{Known Results.} Relatively little is known for the bad science matrix problem. Steinerberger proved that, as $n \rightarrow \infty$, the `worst' matrix scales as
$$ \beta(A) = (1+o(1)) \cdot \sqrt{2 \log{n}}.$$
The upper bound follows relatively quickly from Hoeffding's inequality and the union bound. As for the lower bound, it is shown that a random matrix with entries $\pm 1/\sqrt{n}$ very nearly saturates the upper bound. The proof is completely non-constructive and does not yield a single example of a truly `bad' bad science matrix. Moreover, some basic numerical experiments suggest that the extremal matrices (at least in low dimensions, say, $n \leq 8$) are quite different from random $\pm 1/\sqrt{n}$ matrices and have interesting behavior: in particular, their entries were noted to be exclusively algebraic numbers like $\sqrt{3}$ or $3/\sqrt{29}$.
 We also note that the bad science matrix problem appears to admit \textit{many} extremal matrices: there is an obvious symmetry in permuting rows and columns but it appears that there is a great degree of non-uniqueness.
Finally, nothing is known about the more general bad science matrix problem for rectangular matrices.

\section{Main Results}
\subsection{Structure Theorem and Consequences.}
Our first main result is a theorem about the structure of extremal matrices. The result proves a strong one-to-one correspondence between the $i^{\text{th}}$ row of an extremal matrix and the vertices of the hypercube $\left\{-1,1\right\}^n$ that are being mapped by the matrix to a point whose coordinates attain their maximum absolute value in the $i^{\text{th}}$ coordinate.

\begin{theorem}[\hyperlink{proof:structure}{Structure Theorem}]\label{thm:Structure}
    Let $A = \begin{bmatrix} a_1 \cdots a_m \end{bmatrix}^\top$ be an $m \times n$ matrix maximizing the value of $\beta(A)$ among all such matrices with rows normalized in $\ell^2$. Introducing the set $W_i$ of vertices of the hypercube that are being mapped to a vector whose largest entry is in the $i^{\text{th}}$ coordinate,
    $$W_i = \SET{x \in \SET{-1, 1}^n \; | \; \norm{Ax}_\infty = a_i^\top x},$$

    then the $i^\text{th}$ row of $A$ is given by
    \begin{align*}
        a_i = \frac{\sum_{x \in W_i} x}{\left\|\sum_{x \in W_i} x\right\|}.
    \end{align*}
\end{theorem}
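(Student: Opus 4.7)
The plan is a variational first-order-optimality argument, perturbing only one row at a time. Fix a row index $i$, let $v \in \R^n$ be an arbitrary unit vector, and let $A'$ be the matrix obtained from $A$ by replacing $a_i$ with $v$. The key step will be the inequality
\begin{align*}
    \beta(A') - \beta(A) \;\geq\; \frac{2}{2^n}\bigl(v^\top s_i - a_i^\top s_i\bigr), \qquad s_i := \sum_{x \in W_i} x.
\end{align*}
Granted this, optimality of $A$ makes the left-hand side $\leq 0$ for every unit $v$, so the right-hand side is $\leq 0$ as well, i.e.\ $a_i^\top s_i \geq v^\top s_i$. Setting $v = s_i/\|s_i\|$ yields $a_i^\top s_i \geq \|s_i\|$, whereas Cauchy--Schwarz gives $a_i^\top s_i \leq \|s_i\|$. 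The equality case of Cauchy--Schwarz then forces $a_i = s_i / \|s_i\|$.

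To establish the key inequality termwise in $x$, let $M_i(x) = \max_{j \neq i}|a_j^\top x|$ so that $\|Ax\|_\infty = \max(|a_i^\top x|, M_i(x))$ and $\|A'x\|_\infty = \max(|v^\top x|, M_i(x))$. If $x \notin W_i \cup (-W_i)$ then $|a_i^\top x| < M_i(x)$, so $\|A'x\|_\infty \geq M_i(x) = \|Ax\|_\infty$, a nonnegative contribution. If $x \in W_i \cup (-W_i)$ then $\|Ax\|_\infty = |a_i^\top x|$, and the bound $\|A'x\|_\infty \geq |v^\top x|$ yields $\|A'x\|_\infty - \|Ax\|_\infty \geq |v^\top x| - |a_i^\top x|$. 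Now sum over $W_i$, where $a_i^\top x \geq 0$ so $|a_i^\top x| = a_i^\top x$ and $|v^\top x| \geq v^\top x$, and over $-W_i = \{-y : y \in W_i\}$, where $a_i^\top x \leq 0$ so $|a_i^\top x| = -a_i^\top x$ and $|v^\top x| \geq -v^\top x$. Using $\sum_{x \in -W_i} x = -s_i$, each of the two sums produces $v^\top s_i - a_i^\top s_i$, giving the total $2(v^\top s_i - a_i^\top s_i)$.

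The main technical obstacle is careful bookkeeping around ties and degeneracies. A vertex $x$ may attain the maximum of $\|Ax\|_\infty$ at several rows simultaneously, placing it in more than one $W_j$; and the intersection $W_i \cap (-W_i)$ may be nonempty, consisting exactly of those $x$ with $Ax = 0$ (in particular $a_i^\top x = 0$). Fortunately this degenerate set is centrally symmetric (stable under $x \mapsto -x$), so its contribution to $s_i$ vanishes and it contributes nothing problematic to either sum above, so the derivation of the key inequality goes through unchanged. A secondary point is that the formula $a_i = s_i/\|s_i\|$ implicitly assumes $s_i \neq 0$; this either has to be verified for extremizers, or the conclusion reinterpreted as ``$a_i$ is parallel to $s_i$'' with the degenerate case $s_i = 0$ left unconstrained by the first-order condition.
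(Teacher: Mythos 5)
Your proposal is correct, and its engine is the same as the paper's: the inequality $\langle a_i, s_i\rangle \le \|a_i\|\,\|s_i\|$ together with the observation that replacing $a_i$ by any unit vector $v$ can only change the contribution of $x$ to $\beta$ by at least $|v^\top x| - |a_i^\top x|$ on $W_i \cup (-W_i)$ and by a nonnegative amount elsewhere. The packaging differs: the paper partitions the hypercube by the \emph{least} index attaining the maximum, takes a positive half $U_i$ of each class, and replaces all rows simultaneously, concluding $\beta$ would strictly increase unless every Cauchy--Schwarz is tight; you instead perturb one row at a time and read off the first-order condition $v^\top s_i \le a_i^\top s_i$ for all unit $v$. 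Your version has the advantage of working with the sets $W_i$ exactly as defined in the theorem statement (including vertices where several rows tie for the maximum), whereas the paper quietly swaps in a disjointified version of the $W_i$ and so, strictly speaking, proves the formula for a slightly different vertex set. The price is the bookkeeping you flag: since $W_i \cap (-W_i) = \{x : Ax = 0\}$, summing separately over $W_i$ and $-W_i$ double-counts that set, so one should treat it as its own (nonnegative, and $s_i$-neutral) piece rather than claim the two sums are over disjoint sets --- which is exactly the fix you describe. The caveat about $s_i = 0$ is shared by the paper's own proof and is a genuine (if minor) gap in both.
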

This structure theorem has a number of immediate consequences. The first consequence is something that was already observed (without proof) in \cite{steinerberger}: entries of extremal entries appear to be either rational numbers or square roots of rational number (indeed, this was essentially in the construction of extremal candidates in \cite{steinerberger}). This is indeed the case and follows easily from the Structure Theorem.

\begin{corollary}\label{cor1}
The entries of an optimal bad science matrix are square roots of rational numbers.
\end{corollary}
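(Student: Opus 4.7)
The plan is to read the Corollary as a direct consequence of the Structure Theorem: if $A$ is optimal then, by Theorem~\ref{thm:Structure}, every row has the form
\begin{align*}
    a_i \;=\; \frac{v_i}{\|v_i\|}, \qquad v_i \;:=\; \sum_{x \in W_i} x.
\end{align*}
Since each $x \in W_i$ lies in $\{-1,1\}^n$, the vector $v_i$ lies in $\Z^n$, and in particular $\|v_i\|^2 = \sum_j (v_i)_j^2$ is a nonnegative integer. Writing $k_i := \|v_i\|^2 \in \Z_{\geq 0}$, the $j$-th entry of $a_i$ is
\begin{align*}
    A_{ij} \;=\; \frac{(v_i)_j}{\sqrt{k_i}} \;=\; \sgn((v_i)_j)\,\sqrt{\frac{(v_i)_j^2}{k_i}},
\end{align*}
which is (up to sign) the square root of a nonnegative rational. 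This is exactly the claim.

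The only thing to check is that the formula in Theorem~\ref{thm:Structure} is well-defined, i.e.\ that $v_i \neq 0$, so that we are not dividing by zero. This follows because the rows of an optimal $A$ are unit vectors by hypothesis: if $v_i = 0$ the expression $v_i/\|v_i\|$ could not produce a unit vector, so the Structure Theorem already forces $v_i \neq 0$ and hence $k_i \geq 1$. (Equivalently, $W_i$ must be nonempty and its elements must not cancel completely; both can be read out of the proof of the Structure Theorem.)

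There is no real obstacle here: the Corollary is purely a matter of unwinding the rational/integer structure of the numerator and denominator in Theorem~\ref{thm:Structure}. The substance of the result is entirely contained in the Structure Theorem; once that is in hand, integrality of $v_i$ and of $\|v_i\|^2$ gives the conclusion in one line.
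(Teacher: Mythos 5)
Your argument is correct and is essentially the same as the paper's: the paper deduces the corollary in the remark following the proof of Theorem~\ref{thm:Structure}, observing that $\sum_{x} x$ over a subset of $\{-1,1\}^n$ has integer coordinates, so its norm is the square root of an integer and each entry of $a_i$ is a square root of a rational. Your additional check that the denominator is nonzero is a sensible (if minor) point the paper leaves implicit.
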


We observe that Theorem 1 gives us something much stronger; it shows that extremal entries are really rescaled vectors which themselves are merely the sum of all vertices of a subset of the hypercube. We will use this structural restriction to solve the bad science matrix problem in dimensions $n \leq 4.$

\begin{corollary}[\hyperlink{proof:cor2}{Cases $n \leq 4$}]\label{cor2}
If $A \in \mathbb{R}^{n \times n}$ has normalized rows, then
\begin{align*}
\E \|AX\|_\infty \leq \begin{cases} \sqrt{2}  \qquad &\mbox{when}~ n=2,\\
(\sqrt{2} + \sqrt{3})/2 \qquad &\mbox{when} ~n=3,\\
\sqrt{3} \qquad &\mbox{when} ~n=4. \end{cases}
\end{align*}
These bounds are attained by the matrices
$$ \frac{1}{\sqrt{2}} \begin{bmatrix} 1 & 1 \\ 1 & -1 \end{bmatrix}, \qquad
\begin{bmatrix}
\frac{1}{\sqrt{3}} & \frac{1}{\sqrt{3}} & \frac{1}{\sqrt{3}} \\
\frac{1}{\sqrt{3}} & -\frac{1}{\sqrt{3}} & \frac{1}{\sqrt{3}} \\
-\frac{1}{\sqrt{2}} & 0 & \frac{1}{\sqrt{2}}
\end{bmatrix},
\quad \mbox{and} \quad  \frac{1}{\sqrt{3}}\begin{bmatrix}
            1 & 1 & 1 & 0 \\
            1 & -1 & -1 & 0 \\
            1 & -1 & 1 & 0 \\
            1 & 1 & -1 & 0
        \end{bmatrix}.$$
\end{corollary}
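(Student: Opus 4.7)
My plan is to leverage the Structure Theorem (Theorem \ref{thm:Structure}), which forces each row $a_i$ of any optimal $A$ to be the normalization of a lattice vector $\sigma_i = \sum_{x \in W_i} x \in \Z^n$. This restricts each row to a finite ``alphabet'' $\mathcal{U}_n$ of candidate unit vectors, indexed by nonempty hypercube subsets with nonzero sum. For small $n$, I can enumerate $\mathcal{U}_n$ and then enumerate $n$-tuples of such rows, reducing the continuous optimization over $\R^{n \times n}$ to a finite combinatorial search.

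For $n = 2$, the alphabet is tiny: up to signs one finds only $e_1$, $e_2$, and $(e_1 \pm e_2)/\sqrt{2}$. A direct check of all resulting matrices reveals the Hadamard matrix, with $\beta = \sqrt{2}$, is optimal. For $n = 3$ and $n = 4$ I would first quotient by the symmetry group of the problem — permutations and sign flips of columns, and permutations of rows, all of which preserve $\beta$ — to cut down the alphabet and the tuples one must inspect. For each surviving candidate $n$-tuple of rows I would then verify the self-consistency condition implicit in the Structure Theorem: the sets $W_i = \{x : a_i^\top x = \|Ax\|_\infty\}$ determined by the assembled matrix must, when substituted back into the formula $a_i = \sigma_i/\|\sigma_i\|$, actually reproduce the same rows. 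Inconsistent configurations are discarded.

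For each consistent candidate $A$, I would compute $\beta(A)$ exactly by summing $\|Ax\|_\infty$ over the $2^n$ vertices, and take the maximum. To confirm the stated bounds, one checks directly that the three displayed matrices achieve $\sqrt{2}$, $(\sqrt{2}+\sqrt{3})/2$, and $\sqrt{3}$ respectively; for the $n=4$ matrix this is especially clean because its rows are exactly the sign patterns of shape $(\pm 1, \pm 1, \pm 1, 0)/\sqrt{3}$ with fixed first sign, so every $x \in \{-1,1\}^4$ aligns in the first three coordinates with $\pm a_i$ for some $i$, producing $\|Ax\|_\infty = \sqrt{3}$ uniformly.

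The main obstacle is the case $n = 4$: even after symmetry reduction, the number of consistent 4-tuples to verify is sizable, and the self-consistency check is nontrivial to carry out by hand. I would organize the casework by the shape of the rows (how many nonzero entries of each magnitude appear in each $\sigma_i$), using the observation that $|W_i|$ is bounded by $2^n$ and that its parity constrains the parities of the entries of $\sigma_i$. The Structure Theorem is indispensable here precisely because without it the problem would be a nonconvex continuous maximization that is otherwise intractable even for $n = 3$.
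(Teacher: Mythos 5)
Your proposal follows essentially the same route as the paper: invoke the Structure Theorem to restrict each row to a finite alphabet of normalized subset-sums of hypercube vertices, reduce by row permutations, sign flips, and duplicate/zero rows, and then exhaustively evaluate $\beta$ on the surviving candidate matrices (the paper does this by computer, with $\binom{25}{3}$ matrices for $n=3$ and $\binom{680}{4}\approx 8.8\times 10^9$ for $n=4$, the latter taking about 72 hours). Your additional self-consistency filter is a valid further pruning but is not needed for correctness, since the Structure Theorem already guarantees the optimum lies in the enumerated set.
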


Our proof of Corollary 2 makes extensive use of the computer: the structure theorem can be used to deduce that extremal matrices have to be of a very specific form. One can then, for small values of $n$, explicitly analyze matrices of that size. Thus, in some sense, our proof of Corollary 2 is based on explicit enumeration. However, this scales rather badly in $n$ and even the case $n=5$ appears to be firmly out of reach. It is rather remarkable that we do not currently have a simple way of establishing extremality for small $n$.
We conclude with a very simple observation.
\begin{corollary}
    The number of extremal $n \times n$ matrices is at most $2^{n \cdot 2^n}$.
\end{corollary}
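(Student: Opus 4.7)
The plan is to combine the Structure Theorem with a straightforward counting argument. By Theorem \ref{thm:Structure}, if $A$ is an extremal $n \times n$ matrix, then each row $a_i$ is completely determined by the subset $W_i \subseteq \{-1,1\}^n$ via
\begin{align*}
    a_i = \frac{\sum_{x \in W_i} x}{\left\|\sum_{x \in W_i} x\right\|}.
\end{align*}
Consequently, the whole matrix $A$ is determined by the $n$-tuple $(W_1, \ldots, W_n)$ of such subsets.

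First I would observe that the hypercube $\{-1,1\}^n$ has $2^n$ elements, so its power set has cardinality $2^{2^n}$; thus each $W_i$ can be chosen in at most $2^{2^n}$ ways. Since there are $n$ rows, the number of $n$-tuples $(W_1,\ldots,W_n)$ is at most $(2^{2^n})^n = 2^{n\cdot 2^n}$. Because the map sending an extremal matrix $A$ to its associated tuple $(W_1,\ldots,W_n)$ is well-defined and injective (the tuple determines the matrix by the formula above), the number of extremal matrices is bounded by the number of such tuples.

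There is essentially no obstacle here: the only subtlety is to note that this enumeration overcounts enormously, both because many tuples fail to yield a consistent extremal matrix (the resulting $a_i$ might not satisfy $W_i = \{x : \|Ax\|_\infty = a_i^\top x\}$) and because $\sum_{x \in W_i} x$ may be zero, in which case the corresponding row is not even well-defined by the formula. Neither issue affects the upper bound, and no sharper accounting is needed to conclude $|\{\text{extremal } A\}| \leq 2^{n \cdot 2^n}$.
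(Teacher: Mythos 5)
Your argument is correct and is exactly the one the paper gives: the Structure Theorem makes each row a function of a subset of $\{-1,1\}^n$, of which there are at most $2^{2^n}$, and raising to the power $n$ for the $n$ rows gives $2^{n\cdot 2^n}$. The extra remarks about overcounting and possibly zero sums are fine but not needed for the upper bound.
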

This can be seen very easily: the number of subsets of $\left\{-1,1\right\}^n$ is $\leq 2^{2^n}$ which shows that there are at most this many possibilities for each row. There are $n$ rows bmatrixbmatrixand the inequality follows. Since there is always at least one extremal matrix (by compactness), permutation of rows and columns shows that the number of extremal matrices is at least $n!$ and probably much larger. It could be interesting if this number could be further narrowed down but this appears to be a difficult problem.

\subsection{Construction 1: Lifting Construction.}
One natural question is whether it is possible to explicitly construct examples of matrices $A$ for which $\beta(A)$ is large. In what follows, we present a number of different approaches to this question. The first observation is that from one good example, one can obtain larger good examples: this is somewhat similar to Sylvester's construction of Hadamard matrices and uses a classic lifting trick.

\begin{theorem}[\hyperlink{proof:thm2}{Lifting Theorem}]\label{thm2}
   For any $A \in \mathbb{R}^{n \times n}$ with rows normalized to length 1, the matrix
   $$ B =  \frac{1}{\sqrt{\beta(A)^2 + 1}} \left[\begin{array}{rr}
\beta(A) \cdot A &  \emph{Id}_{n \times n} \\
\beta(A) \cdot A & -\emph{Id}_{n \times n}
\end{array}\right] \,\,\,\, \mbox{satisfies} \,\,\,\, \beta(B) = \sqrt{\beta(A)^2 + 1}.$$
\end{theorem}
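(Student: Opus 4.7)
The plan is to compute $\beta(B)$ directly by splitting the sign vector over $\{-1,1\}^{2n}$ into its two halves and exploiting how the identity blocks interact with the corresponding coordinates of the sign vector.

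First, I would verify that the rows of $B$ have unit $\ell^2$ norm: each row has the form $(\beta(A) a_i, \pm e_i)/\sqrt{\beta(A)^2+1}$, whose squared norm is $(\beta(A)^2\|a_i\|^2 + 1)/(\beta(A)^2+1) = 1$ since $\|a_i\| = 1$. Next, I would parametrize $x \in \{-1,1\}^{2n}$ as $x = (y,z)$ with $y, z \in \{-1,1\}^n$, so that
\begin{align*}
    Bx = \frac{1}{\sqrt{\beta(A)^2+1}} \begin{bmatrix} \beta(A)\, Ay + z \\ \beta(A)\, Ay - z \end{bmatrix}.
\end{align*}
Taking $\ell^\infty$ norms entry by entry, the key elementary identity is that for any $a \in \R$ and $s \in \{-1,1\}$, $\max(|a+s|, |a-s|) = |a|+1$. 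Pairing the $i$-th coordinate in the top block with the $i$-th coordinate in the bottom block and using this identity, I get
\begin{align*}
    \|Bx\|_\infty = \frac{1}{\sqrt{\beta(A)^2+1}} \max_{1 \leq i \leq n} \left(\beta(A)|(Ay)_i| + 1\right) = \frac{\beta(A)\|Ay\|_\infty + 1}{\sqrt{\beta(A)^2+1}}.
\end{align*}

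The rest is a two-line averaging computation: $\|Bx\|_\infty$ does not depend on $z$, so summing over $z \in \{-1,1\}^n$ gives a factor $2^n$ that cancels half of the $2^{2n}$ normalization, and then averaging $\|Ay\|_\infty$ over $y \in \{-1,1\}^n$ produces exactly $\beta(A)$. This yields $\beta(B) = (\beta(A)^2 + 1)/\sqrt{\beta(A)^2+1} = \sqrt{\beta(A)^2+1}$, as claimed.

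There is no real obstacle here; the only subtle step is recognizing the identity $\max(|a+s|,|a-s|) = |a|+1$, which is what makes the identity blocks boost every coordinate by exactly $1$ in $\ell^\infty$. Everything else is just careful bookkeeping of normalization factors.
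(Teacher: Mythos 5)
Your proof is correct and follows essentially the same route as the paper: split $x$ into its two halves, observe that for each $i$ exactly one of the paired coordinates $i$ and $i+n$ realizes $\beta(A)|(Ay)_i| + 1$ (the paper phrases this as a sign-agreement argument, you package it as the identity $\max(|a+s|,|a-s|) = |a| + |s|$), and then average. Your formulation of the key step is a slightly cleaner encapsulation, but the argument is the same.
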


In particular, starting with the optimal $2 \times 2$ matrix $A_2$ and applying Theorem 2 repeatedly, we
arrive at an explicit infinite family of matrices $(A_{2^k \times 2^k})_{k=1}^{\infty}$.

\begin{corollary} \label{cor:power-beta} We have
$$ \beta(A_{2^k \times 2^k}) = \sqrt{k+1} = \sqrt{\log_2(n) +1}.$$
\end{corollary}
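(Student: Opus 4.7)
The plan is to prove this by straightforward induction on $k$, using Theorem 2 as the engine of the inductive step. The setup is that we define the sequence recursively: $A_{2 \times 2}$ is the optimal $2 \times 2$ matrix (the normalized Hadamard matrix) displayed in Corollary 2, and for $k \geq 1$ we let $A_{2^{k+1} \times 2^{k+1}}$ be the matrix $B$ produced by feeding $A = A_{2^k \times 2^k}$ into the construction of Theorem 2. Since Theorem 2 asserts that the lifted matrix has normalized rows (the normalization constant $1/\sqrt{\beta(A)^2 + 1}$ is precisely what is needed), this recursion is well-defined.

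For the base case $k=1$, Corollary 2 shows that the normalized $2 \times 2$ Hadamard matrix achieves $\beta(A_{2 \times 2}) = \sqrt{2} = \sqrt{1+1}$, as required. For the inductive step, assume $\beta(A_{2^k \times 2^k}) = \sqrt{k+1}$. Applying Theorem 2 directly gives
\begin{align*}
\beta(A_{2^{k+1} \times 2^{k+1}}) = \sqrt{\beta(A_{2^k \times 2^k})^2 + 1} = \sqrt{(k+1) + 1} = \sqrt{k+2},
\end{align*}
which completes the induction. Setting $n = 2^k$ so that $k = \log_2(n)$ then yields $\beta(A_{n \times n}) = \sqrt{\log_2(n) + 1}$.

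There is essentially no obstacle here: all of the real content is already packaged into Theorem 2, and the corollary is just the observation that iterating the recurrence $\beta \mapsto \sqrt{\beta^2 + 1}$ from the initial value $\sqrt{2}$ produces the square-root sequence $\sqrt{2}, \sqrt{3}, \sqrt{4}, \ldots$. The only minor point worth mentioning in the write-up is that the dimensions double at each step, justifying the identification $n = 2^k$, and that the base case relies on the explicit extremal matrix from Corollary 2 rather than on any further computation.
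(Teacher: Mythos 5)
Your proposal is correct and matches the paper's argument exactly: the paper also obtains this corollary by starting from the optimal $2\times 2$ matrix with $\beta = \sqrt{2}$ and iterating Theorem 2, so that the recurrence $\beta \mapsto \sqrt{\beta^2+1}$ yields $\sqrt{k+1}$ at dimension $n = 2^k$. The only cosmetic remark is that the base case needs only the direct computation $\beta(A_{2\times 2}) = \sqrt{2}$ (every vertex is mapped to a vector with $\ell^\infty$ norm $\sqrt 2$), not the optimality statement of Corollary 2.
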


We note that, in particular,
$$ \beta(A_{2 \times 2}) = \sqrt{2}, \qquad \beta(A_{4 \times 4}) = \sqrt{3}, \qquad \mbox{and} \qquad \beta(A_{8 \times 8}) = 2$$
which matches the currently best known constructions in these dimensions (see \cite{steinerberger}). Moreover, by Corollary \ref{cor2} above, we know that the $2\times 2$ and $4 \times 4$ constructions are optimal.  It stands to reason that the so arising $8 \times 8$ matrix is also optimal but this remains open. Since $\sqrt{2 \log n}/\sqrt{\log_2 n} = \sqrt{2 \log 2} \approx 1.177$, this construction is a factor of approximately 1.177 away from optimal. The family of matrices matches a construction given by Steinerberger \cite{steinerberger} which is itself based on a construction used by Kunisky \cite{kunisky} for the Koml\'os problem. We note that the construction is limited to $n \times n$ matrices where $n=2^k$ is a power of 2. The next two constructions do not suffer from this drawback.

\subsection{Construction 2: Unsatisfiable Trees.} \label{Unsat-trees}
A more general construction can be given using highly balanced binary trees. We quickly recall their definition.
\begin{definition}[Highly balanced binary trees]\label{def:balanced-trees}
    For a fixed integer $n \geq 1$, fill up a binary tree with vertices from left to right until one has $n$ leaves, and finally add an edge that points into the root.
\end{definition}
We label the edges of such a highly balanced binary tree in the following way: edges that point left have label $-1$,  edges that point right have label $1$, and the edge that points to the root has label $1$.  From here, for a leaf $v$, walk along the unique path from the root to $v$. Then the edge labels of this path becomes a row of the matrix this method generates, where if the length of the path is less than $n$, we make the rest of the entries 0. The case $n=4$ is illustrated in Fig. 1. One of our main results is a precise analysis of this construction: as it turns out, the construction performs exactly as well as the Hadamard-style tensor construction when $n$ is a power of 2.

\begin{figure}[h!]
    \centering
    \begin{minipage}{.45\textwidth}
        \centering
    \[
        \begin{tikzcd}
        	&& {} \\
        	&& \bullet \\
        	& \bullet && \bullet \\
        	\bullet & \bullet && \bullet & \bullet
        	\arrow["1"', from=1-3, to=2-3]
        	\arrow["{-1}", from=2-3, to=3-2]
        	\arrow["1"', from=2-3, to=3-4]
        	\arrow["{-1}", from=3-2, to=4-1]
        	\arrow["1", from=3-2, to=4-2]
        	\arrow["{-1}"', from=3-4, to=4-4]
        	\arrow["1"', from=3-4, to=4-5]
        \end{tikzcd}
    \]
    \end{minipage}%
    \begin{minipage}{0.5\textwidth}
        \centering
\begin{align*}
    \left[\begin{array}{rrrr}
        1 & -1 & -1 & 0 \\
        1 & -1 & 1 & 0 \\
        1 & 1 & -1 & 0 \\
        1 & 1 & 1 & 0
    \end{array}\right]
\end{align*}
    \end{minipage}
    \caption{An unsatisfiable tree and the corresponding matrix.}
\end{figure}
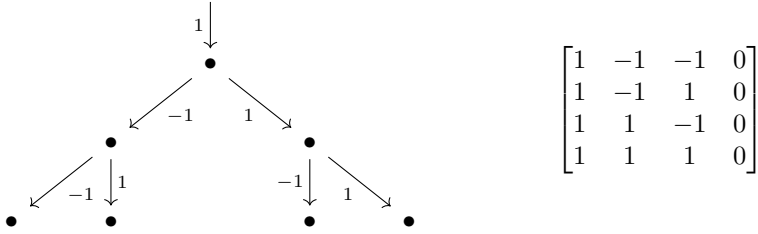

\begin{theorem}[\hyperlink{proof:thm3}{Unsatisfiability Theorem}] \label{beta-bin}
    Let $A \in \R^{n \times n}$ be the matrix whose rows correspond to the $n$ prefixes in the binary tree described above. This construction yields \begin{align*}
        \beta(A) &= \left(2\sqrt{\floor{\log_{2}\left(n\right)+1}}-\sqrt{\floor{\log_{2}n}+2}\right) \\
        &\hspace{1cm} +  \frac{n}{2^{\floor{\log_{2}n}}}\left(\sqrt{\floor{\log_{2}n}+2}-\sqrt{\floor{\log_{2}n}+1}\right)
    \end{align*}
\end{theorem}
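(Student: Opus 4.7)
The strategy is to identify, for each $x \in \{-1,1\}^n$, a single row of $A$ realizing $\|Ax\|_\infty$, and then average. Write $d = \lfloor \log_2 n \rfloor$ and $k = n - 2^d$, so the tree has $2^d - k$ leaves at depth $d$ and $2k$ leaves at depth $d+1$; after row-normalization, a depth-$e$ row is divided by $\sqrt{e+1}$.

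Given $x$, define $y = \sgn(x_1) \cdot x$ (so $y_1 = 1$) and descend the tree from the root, choosing at each step the child whose edge label matches the next bit $y_2, y_3, \dots$. Call $\ell(x)$ the leaf so reached and $e(x)$ its depth. By construction $A_{\ell(x), j} = y_j$ for every $j$ in the support of row $\ell(x)$, so each term of $(Ax)_{\ell(x)} = \sum_j A_{\ell(x), j} x_j$ equals $\sgn(x_1)$. Thus $|(Ax)_{\ell(x)}| = e(x) + 1$, and the normalized contribution at $\ell(x)$ is $\sqrt{e(x)+1}$.

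The central step is to show no other row does better. A competing leaf $v$ agrees with $\ell(x)$ along some initial segment of $q$ descent edges and then diverges, which forces the entry $A_{v, q+2} = -y_{q+2}$; downstream entries of row $v$ are unconstrained $\pm 1$'s. A direct count then gives $|(Ax)_v| \leq q + (e(v) - q - 1) = e(v) - 1$, so the normalized contribution at $v$ is at most $(e(v)-1)/\sqrt{e(v)+1}$. Because the tree is highly balanced we have $e(v), e(x) \in \{d, d+1\}$, and the required comparison $(e(v)-1)^2 \leq (e(v)+1)(e(x)+1)$ is easily verified in each of the four resulting cases. Hence $\|Ax\|_\infty = \sqrt{e(x)+1}$.

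To finish, note that as $x$ ranges uniformly over $\{-1,1\}^n$, the bits $y_2, y_3, \ldots$ are i.i.d.\ uniform in $\{-1,1\}$, so $\ell(x)$ is the endpoint of a uniform random binary walk from the root. Each depth-$d$ leaf is thus reached with probability $2^{-d}$ and each depth-$(d+1)$ leaf with probability $2^{-(d+1)}$, giving $\Pr[e(x) = d] = (2^d - k)/2^d$ and $\Pr[e(x) = d+1] = k/2^d$. Substituting $k = n - 2^d$ into
\[
    \beta(A) = \frac{2^d - k}{2^d}\sqrt{d+1} + \frac{k}{2^d}\sqrt{d+2}
\]
and regrouping yields exactly the stated identity. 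The main technical obstacle is the second step above: ruling out that some row different from $\ell(x)$ could overtake $\ell(x)$ after normalization, which is precisely where the "highly balanced" structure (all leaves within one level of each other) is essential.
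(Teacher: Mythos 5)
Your proof is correct and follows essentially the same route as the paper's: for each $x$ identify the unique leaf whose signed prefix matches $x$, show that row attains $\|Ax\|_\infty$ with value $\sqrt{e(x)+1}$, and then count how many $x$ land on each leaf. If anything, your verification that no divergent row can overtake the matching one (the bound $(e(v)-1)/\sqrt{e(v)+1} \leq \sqrt{e(x)+1}$ using that all leaf depths lie in $\{d, d+1\}$) is spelled out more carefully than the paper's brief appeal to Cauchy--Schwarz.
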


When $n=2^k$ is itself a power of 2, the matrices derived from this method satisfy $\beta(A_{2^k \times 2^k}) = \sqrt{k+1}$ which matches Corollary~\ref{cor:power-beta}. However, the construction is more versatile and gives good examples of $n \times n$ matrices where $n$ is not a power of 2.

\subsection{Wide Matrices}
So far, our focus has been on $n \times n$ matrices, however, there is no particular reason for doing so. The $1\times n$ case can be quickly resolved by Jensen's or Khintchine's inequality \cite{haagerup}, and we state it as a useful lemma.
 \begin{lemma}[\hyperlink{proof:lemma4}{Contraction Lemma}] \label{lemma4}
     For any vector $a \in \mathbb{R}^{n}$, we have
     $$ \mathbb{E} |\langle a, X \rangle| = \frac{1}{2^n} \sum_{x \in \left\{-1,1\right\}^n} \left|\left\langle a,x\right\rangle\right| \leq \|a\|_{2}.$$
     Equality is attained when $a$ is a standard unit basis vector.
 \end{lemma}
The $2 \times n$ case is the next natural subject of interest. This case appears to already be remarkably challenging and gives rise to some amusing problems. We believe to have identified the correct answer and verified it numerically in low dimensions, but were unable to prove this rigorously.

 \begin{conjecture} An optimal $2 \times n$ matrix is given by
    \begin{align*}
        A = \frac{1}{\sqrt{2}}
        \begin{bmatrix}
            1 & 1 & 0 & \cdots & 0 \\
            1 & -1 & 0 & \cdots & 0
        \end{bmatrix}
    \end{align*}
\end{conjecture}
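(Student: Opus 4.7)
The plan is to exploit the elementary identity $\max(|u|,|v|) = \tfrac{1}{2}(|u+v| + |u-v|)$, which linearizes the $\ell^\infty$-norm in the two-row setting and reduces the $2 \times n$ problem to two applications of Theorem 4. Writing $a_1, a_2 \in \R^n$ for the rows of $A$, the identity gives
\begin{align*}
\beta(A) = \frac{1}{2 \cdot 2^n}\sum_{x \in \{-1,1\}^n}\bigl(|\langle a_1 + a_2, x\rangle| + |\langle a_1 - a_2, x\rangle|\bigr).
\end{align*}

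Applying Theorem 4 to each of $a_1 + a_2$ and $a_1 - a_2$, and setting $c = \langle a_1, a_2\rangle \in [-1,1]$, the unit-row hypothesis yields
\begin{align*}
\beta(A) \leq \tfrac{1}{2}\bigl(\norm{a_1 + a_2} + \norm{a_1 - a_2}\bigr) = \tfrac{1}{2}\bigl(\sqrt{2 + 2c} + \sqrt{2 - 2c}\bigr) = \sqrt{1 + \sqrt{1 - c^2}} \leq \sqrt{2},
\end{align*}
with equality in the last step iff $c = 0$. For the matching lower bound, observe that for the proposed matrix, exactly one of $x_1 + x_2$ and $x_1 - x_2$ has absolute value $2$ (the other vanishes) for every $x \in \{-1,1\}^n$; hence $\norm{Ax}_\infty = \sqrt{2}$ pointwise, so $\beta(A) = \sqrt{2}$, matching the upper bound.

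To additionally classify all extremizers, one would strengthen the equality case of Theorem 4: via Jensen (equivalently Cauchy--Schwarz) applied to $Y = |\langle b, x\rangle|$, the equality $\E|\langle b, x\rangle| = \norm{b}$ forces $|\langle b, x\rangle|$ to be constant over $x \in \{-1,1\}^n$, which for $n \geq 2$ forces $b$ to be a scalar multiple of a single standard basis vector (compare $\langle b, x\rangle$ on pairs of sign vectors differing in a single coordinate, so each $b_i$ lies in $\{0, \pm L\}$, and a second coordinate flip rules out two simultaneously nonzero entries). Applied to $b = a_1 \pm a_2$ together with $c = 0$ and unit-norm rows, this recovers the stated matrix up to row sign flips and column permutations.

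The main obstacle I foresee is this equality-case refinement of Theorem 4, since the theorem as stated only asserts equality at one explicit vector. However, the bound $\beta(A) \leq \sqrt{2}$ itself needs no such refinement: it follows immediately from the two displayed steps, so settling the conjectured \emph{value} is genuinely elementary once Theorem 4 is in hand; the extra work is only needed to pin down the full set of optimizers.
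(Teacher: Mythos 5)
The paper offers no proof of this statement: it is explicitly left as a conjecture (``we believe to have identified the correct answer but were unable to prove this rigorously''), so there is no argument of the authors to compare yours against. I have therefore checked your proposal on its own terms, and it appears to be a complete and correct proof of the conjecture. The scalar identity $\max(|u|,|v|) = \tfrac{1}{2}(|u+v|+|u-v|)$ holds for all reals (check the cases of equal and opposite signs), and with $u=\langle a_1,x\rangle$, $v=\langle a_2,x\rangle$ it rewrites $\|Ax\|_\infty$ exactly as $\tfrac{1}{2}\left(|\langle a_1+a_2,x\rangle|+|\langle a_1-a_2,x\rangle|\right)$; Theorem 4, which the paper states for an arbitrary (not necessarily unit) vector, then applies term by term. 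The parallelogram computation $\|a_1+a_2\|^2=2+2c$, $\|a_1-a_2\|^2=2-2c$ and the estimate $\tfrac{1}{2}(\sqrt{2+2c}+\sqrt{2-2c})=\sqrt{1+\sqrt{1-c^2}}\le\sqrt{2}$ are correct, and the displayed matrix satisfies $\|Ax\|_\infty=\sqrt{2}$ for every $x$, so the upper and lower bounds meet. This establishes $\max\beta=\sqrt{2}$ over all $2\times n$ matrices with unit $\ell^2$ rows, which is exactly what the conjecture asserts. Two remarks. First, your third paragraph on classifying all extremizers is not required for the statement as written (which only claims the given matrix is optimal, not uniquely so); the sketch there is nonetheless essentially sound, since equality in Jensen forces $|\langle b,x\rangle|$ to be constant on the cube and your coordinate-flip argument then pins $b$ to a multiple of a standard basis vector. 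Second, the method is genuinely specific to two rows: no analogous identity linearizes the maximum of three or more absolute values, which is consistent with the $m\ge 3$ rectangular cases remaining difficult. You should consider writing this up carefully; it upgrades the paper's Conjecture 1 to a theorem.
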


The structure theorem immediately suggests the connection to set partitions. While investigating this connection, we came across a question that we were able to resolve and which appears to be of intrinsic interest.

\begin{theorem}[\hyperlink{proof:thm5}{Partition Theorem}] \label{thm5}
  For any $A \subset \{-1, 1\}^n$ we have
  $$\norm{\sum_{x \in A} x}_2 \leq 2^{n-1}$$
  with equality if we choose $A$ to be the set of all vectors whose $i$th coordinate is fixed to be either $1$ or $-1$.
\end{theorem}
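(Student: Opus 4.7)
The plan is Fourier analysis on the discrete hypercube. Let $f = \mathbf{1}_A: \{-1,1\}^n \to \{0,1\}$ and expand $f = \sum_{T \subseteq [n]} \hat{f}(T) \chi_T$ in the Walsh basis $\chi_T(x) = \prod_{i \in T} x_i$, with $\hat{f}(T) = 2^{-n}\sum_x f(x) \chi_T(x)$. A direct calculation identifies the $i$-th coordinate of $S := \sum_{x \in A} x$ with a linear Fourier coefficient: $S_i = \sum_{x \in A} x_i = 2^n \hat{f}(\{i\})$, so that $\|S\|_2^2 = 2^{2n} \sum_{i=1}^n \hat{f}(\{i\})^2$. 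The problem thus reduces to showing that the linear Fourier weight of $f$ satisfies $\sum_i \hat{f}(\{i\})^2 \leq 1/4$.

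The crucial input is that $f$ is Boolean, so $f^2 = f$ pointwise. Applying Parseval and using $f(x)^2 = f(x)$ gives
\begin{align*}
\sum_{T \subseteq [n]} \hat{f}(T)^2 \;=\; 2^{-n}\sum_x f(x)^2 \;=\; 2^{-n}\sum_x f(x) \;=\; \hat{f}(\emptyset) \;=\; p,
\end{align*}
where $p := |A|/2^n$. Subtracting the constant-term square $\hat{f}(\emptyset)^2 = p^2$ yields the total non-constant Fourier mass $\sum_{T \neq \emptyset} \hat{f}(T)^2 = p(1-p) \leq 1/4$. Since the linear terms form a subset of the non-constant ones, $\sum_i \hat{f}(\{i\})^2 \leq 1/4$, and therefore $\|S\|_2^2 \leq 2^{2n}/4 = 4^{n-1}$, which is exactly the claimed bound $\|S\|_2 \leq 2^{n-1}$.

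For the equality characterization, the chain above is tight only when $p(1-p) = 1/4$ (forcing $p = 1/2$, i.e.\ $|A| = 2^{n-1}$) and when $\hat{f}(T) = 0$ for every $T$ with $|T| \geq 2$, so $f$ must be an affine function of $x_1, \ldots, x_n$. The only slightly delicate step I foresee is the final case analysis classifying Boolean affine functions on $\{-1,1\}^n$: differencing $f$ across a single coordinate flip shows each nonzero linear coefficient must equal $\pm 1/2$, and having two such nonzero coefficients would push some value of $f$ outside $\{0,1\}$. This forces $f(x) = (1 \pm x_j)/2$ for some $j$, corresponding exactly to $A = \{x : x_j = \pm 1\}$, which matches the equality case in the statement.
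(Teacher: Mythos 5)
Your proof is correct, and it takes a genuinely different route from the paper. The paper first proves an exchange lemma (Lemma 7): an optimal $A$ contains no antipodal pair and has size exactly $2^{n-1}$; it then writes $\norm{u}^2 = \sum_{b \in A} u^\top b$ with $u = \sum_{a\in A}a$, symmetrizes over antipodes to get $\frac{1}{2}\sum_{b \in \{\pm 1\}^n}|u^\top b|$, and invokes its Theorem~4 (the Khintchine/Jensen bound $\E|u^\top x| \leq \norm{u}_2$) before dividing by $\norm{u}$. You instead identify $\bigl(\sum_{x\in A}x\bigr)_i$ with $2^n\hat f(\{i\})$ for $f = \mathbf{1}_A$ and reduce everything to the level-one inequality $\sum_i \hat f(\{i\})^2 \leq \sum_{T\neq\emptyset}\hat f(T)^2 = p(1-p) \leq 1/4$, which follows from Parseval plus $f^2 = f$. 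Both arguments are elementary and each secretly produces the same factor of $1/2$ (the paper via the antipodal-free reduction, you via $p(1-p)\leq 1/4$), but your version needs no preliminary structural lemma and, as a bonus, yields the full converse: tightness forces $f$ to be an affine Boolean function, and your classification of those (each linear coefficient is $0$ or $\pm 1/2$, and two nonzero ones would push $f$ outside $\{0,1\}$) shows the half-cubes are the \emph{only} extremizers, which is strictly more than the theorem asserts (the paper only claims, and only needs, that the half-cubes attain equality). The paper's route has the structural virtue of reusing its Theorem~4 and its connection to the structure theorem, but your argument is self-contained and arguably cleaner.
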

\begin{remark}
    Dividing by $2^n$, we see that Theorem $5$ equivalently states that a random vector $X$ that is uniformly distributed in the Boolean cube $\{-1, 1\}^n$ satisfies
    \[
        \big|\big|\mathbb{E}[X \mathbf{1}_{X \in A}]\big|\big|_2 \leq \frac{1}{2}
    \] for any $A \subset \{-1, 1\}^n$. Actually, the same is true not just for the uniform distribution on the Boolean cube, but for any mean zero {isotropic} random vector (a random vector $X$ is isotropic if $\E[X X^\top] = \text{Id}_n$ or equivalently $\E[\innerprod{X, v}^2] = ||v||^2_2$ for all $v \in \mathbb{R}^n$). We state this generalization (for suggesting which we thank an anonymous reviewer) separately below.
\end{remark}

\begin{prop}[\hyperlink{proof:prop1}{Theorem 5 for Centered Isotropic Vectors}] \label{prop1}
    Let $X$ be a mean zero isotropic random vector in $\R^n$ and let $A$ be a measurable subset of $\R^n$. Then
    \[
        \big|\big|\mathbb{E}[X \mathbf{1}_{X \in A}]\big|\big|_2 \leq \frac{1}{2}.    \]
\end{prop}

\subsection{\texorpdfstring{$\ell^p$}{Lg} Variants.}
The $\ell^\infty$ norm leads to fascinating extremizers but is otherwise difficult to deal with. A natural question is whether and in what way things change if we investigate the problem for other $p$. As it turns out, the natural other cases $p=1$ and $p=2$ are not quite as interesting and can be easily resolved. For $2 < p < \infty$ we are unable to compute extremizers but can characterize the two-sided asymptotic behavior.

\begin{prop}[\hyperlink{proof:prop2}{$p=1$ and $p=2$}]\label{prop2} We have
    \begin{align*}
        \max_{A \in \R^{n \times n}}\E \norm{AX}_1 = n
    \end{align*}
    with equality being attained at the identity matrix. Moreover,
    \begin{align*}
        \max_{A \in \R^{n \times n}}\E \norm{AX}_1 = \sqrt{n}
    \end{align*}
    with equality if $A$ is an isometry.
\end{prop}

We note that the problem of finding optimal matrices in the intermediate range $2 < p < \infty$ remains open but Theorem 6 below is a first attempt at characterizing the correct orders of growth. Even though the bounds are not sharp in the cases $p=1,2$, the argument works for all $p \in [1,\infty]$, so we state it in its full generality.
\begin{theorem}[\hyperlink{proof:thm6}{$1 \leq p \leq \infty$}] \label{thm6}
    As before, let $X$ be a random vector with entries drawn independently and uniformly at random from $\{-1, 1\}$. Then,  we have \begin{align*}
    \max_{A \in \R^{n \times n}} \mathbb{E} \|AX\|_p \asymp \begin{cases}
      \sqrt{p} n^{1/p} & \textrm{if } p \leq \log n, \\
      \sqrt{\log n} & \textrm{if } p \geq \log n.
    \end{cases}
    \end{align*}
\end{theorem}

\section{Proofs}
\subsection{Proof of the Structure Theorem}
\begin{proof}[\hypertarget{proof:structure}{Proof of Theorem  \ref{thm:Structure}}]
    Define $W_i$ to be the set of $x \in \SET{\pm 1}^n$ so that $i$ is the smallest index with absolute largest coordinate in $Ax$. By construction, we see that the $W_i$ are closed under $x \mapsto -x$, that they are disjoint, and hence partition $\SET{\pm 1}^n$. Thus we can write:
    \begin{align*}
        \beta(A) = \E \norm{AX}_{\infty} = \frac{1}{2^n}\sum_{x \in \SET{\pm 1}^n} \norm{Ax}_\infty = \frac{1}{2^n}\sum_{i=1}^n \sum_{x \in W_i} |a_i^\top x|
    \end{align*}
    Now take $U_i = \SET{x \in W_i \; | \; |a_i^Tx| = a_i^Tx}$ to be the positive half of $W_i$. For each $x \in W_i$ with $a_i^Tx = 0$, $U_i$ will include both $x$ and $-x$, but we will eventually want $U_i$ to be antipodal. So in this case keep only one of $x$ and $-x$. Then we have that: \begin{align*}
        \sum_{x \in W_i} |a_i^\top x| = 2\sum_{x \in U_i} a_i^\top x.
    \end{align*}
    The idea is to now optimize each of the sums $ \sum_{x \in U_i} a_i^\top x$ separately, since we have removed the independence among the rows from the objective function. One has \begin{align*}
        2 \sum_{x \in U_i} \langle a_i, x \rangle = 2\left \langle a_i, \sum_{x \in U_i} x  \right \rangle
    \end{align*}
    By the Cauchy-Schwarz inequality,
    \begin{align*}
        \left \langle a_i, \sum_{x \in U_i} x  \right \rangle \leq \|a_i\| \left\|\sum_{x \in U_i} x\right\| = \left\|\sum_{x \in U_i} x\right\|
    \end{align*}
    since each of $A$'s rows have norm 1. Now, if $A$ does not have equality for all of the above inequalities, then we could replace $A$'s rows with \begin{align*}
        \frac{\sum_{x \in U_i} x}{\|\sum_{x \in U_i} x\|}
    \end{align*}
    to make the objective function strictly larger. But $A$ is optimal, so we see that $a_i$ is a positive multiple of the above vector, again by Cauchy-Schwarz. Since $a$ has unit norm, we conclude that
    \begin{align*}
        a_i = \pm \frac{\sum_{x \in U_i} x}{\|\sum_{x \in U_i} x\|},
    \end{align*}
    completing the proof.
\end{proof}

Notice that since each of the $x \in U_i$ are vectors on the hypercube, namely have every coordinate of $\pm 1$, $\sum_{x \in U_i} x$ is a vector with integer coordinates. Thus we see that $\norm{\sum_{x \in U_i} x}$ is a square root of an integer, and hence every entry of $a_i$ is a square root of a rational number.

\subsection{Proof of Corollary 2}
\begin{proof}[\hypertarget{proof:cor2}{Proof of Corollary \ref{cor2}}]
This argument uses a computer search to check all possible candidate matrices.
  Theorem \ref{thm:Structure} implies each row $a_i$ of an optimal matrix $A$ is a normalized sum of vectors contained in some subset $V \subseteq \{-1, 1\}^n$, i.e.
  \begin{align*}
    a_i = \frac{\sum_{x \in V}x}{\norm{\sum_{x \in V} x}}.
\end{align*}
We can now check all possible matrices formed by combining rows of this form.
Note, however, that this exhaustive search becomes prohibitive very quickly. Indeed, there are $2^{2^n}$ possible subsets of $\{-1, 1\}^n$, and must consider all possible $n$-tuples of rows of this form. Na\"ively, this yields $(2^{2^n})^n = 2^{n \cdot 2^n}$ matrices to check. For $n = 3$ the search space is of size $2^{24} \approx 1.7 \times 10^{7}$, and for $n = 4$ we have $2^{64} \approx 1.8 \times 10^{19}$ possibilities. We exploit the following symmetries.
\begin{enumerate}[label=(\roman*)]
    \item The $\beta$ value of a matrix is invariant under permutations over rows.
    \item An optimal matrix may not have duplicate rows.
    \item An optimal matrix doesn't have zero rows.
    \item The $\beta$ value of a matrix is invariant under row-wise sign flips
\end{enumerate}
    Observations (i) and (ii) allow us to consider row combinations instead, yielding $\binom{2^{2^n}}{n}$ matrices to check. By observation (iii), we further reduce the search space by deleting the zero rows (for instance the full subset $V = \{-1, 1\}^n$ yields such a row) and by observation (iv) we eliminate duplicate rows equal up to a sign flip (for instance, we keep only one of the rows arising from the subsets $V_1 = \{1\} \times\{-1, 1\}^{n-1}$ and $V_2 = \{-1\} \times \{-1, 1\}^{n-1}$).

    For the $n = 3$ case, we start with $2^{2^3} = 256$ rows, $238$ of which are non-zero. Furthermore, since there may be multiple subsets $V$ leading to the same row, we eliminate redundant ones, yielding $50$ unique rows. Finally, removing redundant rows up to a sign-change reduces the search space to $25$ final rows. This leaves $\binom{25}{3} = 2300$ candidate matrices, which can be easily computed.

    The $n = 4$ case is approached in the same manner.
    Performing the same reductions as before, we go from $2^{2^4} = 65536$ rows to $680$ candidate rows, yielding $\binom{680}{4} = 8830510430$ matrices to check. These can be checked in a rather long but attainable amount of time. For context, a na\"ive sequential search took 72 hours to complete but we suspect it could be vastly parallelized to reduce the runtime.

    However, the $n = 5$ case is prohibitively expensive, since the initial rows $2^{2^5} \approx 4.3 \times 10^9$ cannot even be loaded into memory.
    Similarly, the rectangular bad science problem of dimensions $2 \times 3$, $2 \times 4$ and $3 \times 4$ can be checked, with search spaces of size $\binom{25}{2} = 300, \binom{680}{2} = 230860$ and $\binom{680}{3} = 52174360$, respectively. The first two cases took less than a minute to run, but the $3 \times 4$ took one hour on Google Colab.
    The Jupyter Notebooks containing the row reductions and the exhaustive search, with step-by-step comments, can be found in the Github repository \cite{github}.
\end{proof}

\subsection{Proof of Theorem 2}
\begin{proof}[\hypertarget{proof:thm2}{Proof of Theorem \ref{thm2}}] Fix
\begin{align*}
    \gamma = \frac{\beta(A)}{\sqrt{\beta(A)^2+1}}
\end{align*}
and let $x = [x_1 \; x_2] \in \{-1, 1\}^{2n_0}$, with $x_1 \in \{-1, 1\}^{n_0}$ and $x_2 \in \{-1, 1\}^{n_0}$. Then, $Bx$ can be written as
\begin{align*}
    B x = \begin{bmatrix}
        \gamma A x_1 + \sqrt{1-\gamma^2} x_2 \\
        \gamma A x_1 - \sqrt{1-\gamma^2} x_2
    \end{bmatrix}
\end{align*}
Note that coordinate $j$ of $Bx$ has the form
\begin{align*}
    (Bx)_j = \begin{cases}
        \gamma a_j^\top x_1 + \sqrt{1-\gamma^2}(x_2)_j & \textrm{if } \hspace{5pt} 1 \leq j \leq n_0 \\
         \gamma a_{j-n_0}^\top x_1 - \sqrt{1-\gamma^2}(x_2)_{j-n_0} & \textrm{if } \hspace{5pt} n_0 < j \leq 2n_0
    \end{cases}
\end{align*}
We now note that each term of the form $\gamma \cdot a_j^\top x_1$ contributes to two different coordinates of $Bx$: it does once to coordinate $j$, with the term being added to $\sqrt{1-\gamma^2}(x_2)_j$, and again to coordinate $j + n_0$, with the term being subtracted $\sqrt{1-\gamma^2}(x_2)_j$.
Now, we may bound the absolute value of the coordinates of $Bx$ in a straightforward manner. Letting the index $1 \leq i \leq n_0$ be the one for which $\norm{Ax_1}_{\infty} = |a_i^\top x_1|$ holds, for $1 \leq j \leq n_0$ we have \begin{align*}
    |(Bx)_j| &\leq \max\{|\gamma a_j^\top x_1 + \sqrt{1 - \gamma^2}(x_2)_j|, |\gamma a_j^\top x_1 - \sqrt{1 - \gamma^2}(x_2)_j|\}
        \\ &\leq \gamma |a_j^\top x_1| + \sqrt{1 - \gamma^2}
        \\ &\leq \gamma |a_i^\top x_1| + \sqrt{1 - \gamma^2} = \gamma \norm{Ax_1}_{\infty} + \sqrt{1 - \gamma^2}
\end{align*}
and equally for the second half of the coordinates $|(Bx)_{j + n_0}|$. Thus
$$\norm{Bx}_\infty \leq \gamma \norm{Ax_1}_{\infty} + \sqrt{1 - \gamma^2}.$$
The important observation comes from noting that if the terms $a_j^\top x_1$ and $\sqrt{1 - \gamma^2}(x_2)_j$ agree in sign, then coordinate $j$ will be larger in absolute value than coordinate $j + n_0$, and vice-versa if $a_j^\top x_1$ and $\sqrt{1 - \gamma^2}(x_2)_j$ disagree in sign. With this in mind, we see that one of the choices $j = i$ or $j = i + n_0$ (the one in which the terms $\gamma a_{i}^\top x_1$ and $\sqrt{1 - \gamma^2}(x_2)_i$ or $\gamma a_{i}^\top x_1$ and $-\sqrt{1 - \gamma^2}(x_2)_i$ agree in sign) achieves equality in each step, yielding $\norm{Bx}_\infty = \gamma \norm{Ax_1}_{\infty} + \sqrt{1 - \gamma^2}$.
It is easy to see that summing over all possible $x = \{-1, 1\}^{2n_0}$ translates this result to the $\beta$ setting:
\begin{align*}
  \beta(B) &= \E \norm{BX}_{\infty} =  \frac{1}{2^{2n_0}}\sum_{x \in \{-1, 1\}^{2k}} \|Bx\|_\infty \\ &=  \frac{1}{2^{n_0} \cdot 2^{n_0}} \sum_{x_1, x_2 \in \{\pm 1\}^{n_0}} \gamma \hspace{2pt} \|Ax_1\|_\infty + \sqrt{1 - \gamma^2} = \gamma \beta(A) + \sqrt{1 - \gamma^2}
\end{align*}
 Plugging in $\gamma = \beta(A)/\sqrt{\beta^2(A)+1}$, we obtain
\begin{align*}
    \beta(B) = \frac{1}{\sqrt{\beta(A)^2+1}} + \frac{\beta^2(A)}{\sqrt{\beta(A)^2+1}} = \sqrt{\beta(A)^2+1}.
\end{align*}
Finally, we note that if $A$ has normalized rows then $B$ does too, since $\gamma^2 + 1-\gamma^2 = 1$. This completes the proof.
\end{proof}
\subsection{Proof of Theorem \ref{beta-bin}}
\begin{definition}
    We define $A$ as a set of square matrices which we construct from building a binary tree which is highly balanced as in Definition~\ref{def:balanced-trees}. Every row in the matrix corresponds to some leaf in the tree, and every column in the matrix corresponds to a level of the binary tree. Given this binary tree we yield the corresponding matrix in the following way:
    \begin{itemize}
        \item We let the first column be the all 1's column.
        \item For every path from the root to the leaves: we correspond $A_{i,j}$ to the leaf with index $i \in [n]$ at the $j$'th vertex in the path from the root. If this vertex represents the left child of its parent vertex, we let this entry be $-1$, otherwise we let this entry be $+1$. Entries that do not correspond to vertices in this tree correspond to 0's in the matrix.
    \end{itemize}
\end{definition}
\label{def:third-mat}

\subsection*{Example of binary tree with five leaves}
\begin{align*}
\includegraphics[scale=0.45]{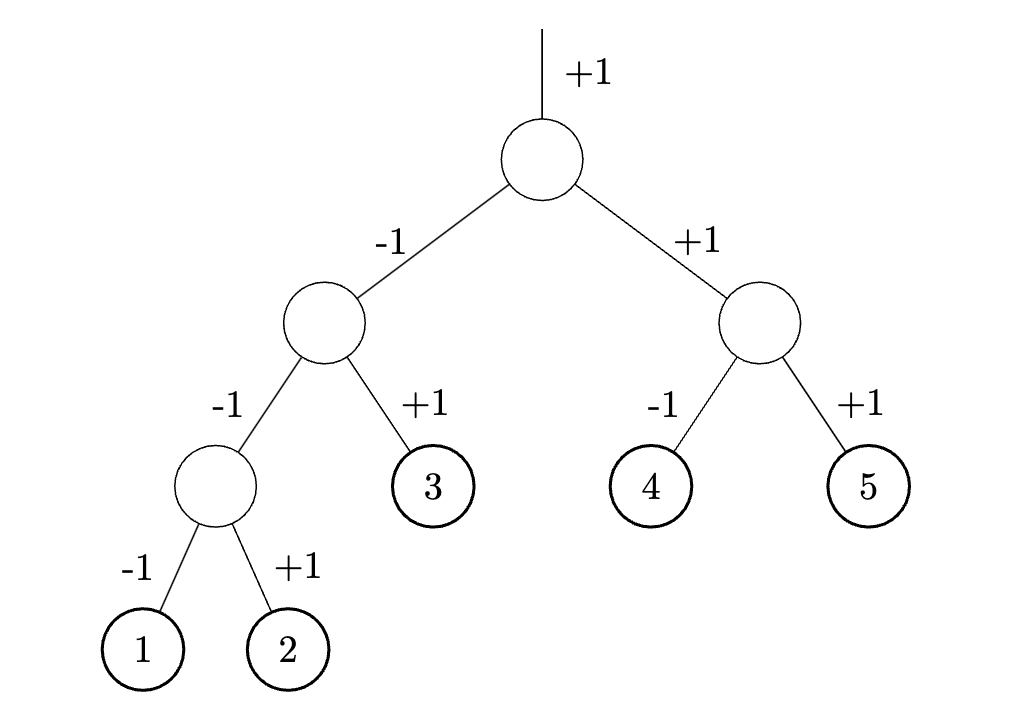}
\end{align*}

We construct the corresponding matrix by mapping every leaf to the row with the same index and the index of every column corresponding to the level in the tree. For example, to construct row 3, we examine the root-to-leaf path of the leaf labeled with 3 which is $\{+1, -1, +1\}$. We set the first 3 entries of the row to match this prefix and set the rest of the entries of the row to 0. We repeat this process for the rest of the rows in this binary tree and before normalization, the corresponding matrix would look as follows:
\[
A_5 = \begin{bmatrix}
+1 & -1 & -1 & -1 & 0 \\
+1 & -1 & -1 & +1 & 0 \\
+1 & -1 & +1 & 0 & 0 \\
+1 & +1 & -1 & 0 & 0 \\
+1 & +1 & +1 & 0 & 0 \\
\end{bmatrix}
\]
We would then normalize the rows of this matrix to discuss its $\beta$ value.
We now use these matrices to prove Theorem 3.

\begin{proof}[\hypertarget{proof:thm3}{Proof of Theorem \ref{beta-bin}}]
Suppose \(2^k < n < 2^{k + 1}\) (deferring the case when \(n = 2^k\) to Corollary~\ref{cor:power-beta}), i.e. \(k = \lfloor\log_2(n)\rfloor\).
Note that all rows of \(A\) are of the form $\{\pm 1\}^{k+1}$ followed by \(n - k - 1\) zeros or $\{\pm 1\}^{k+2}$ followed by \(n - k - 2\) zeros. 
We leverage the fact that if \(|a_i^Tx| = \Vert Ax\Vert_\infty\) for \(x\) in the cube, then \(x = \pm a_i\) entry-wise except for the zero entries of \(a_i\). This is because if we let \(\tilde{a}\) be obtained by truncating the zeros off of \(a_i\), Cauchy-Schwarz tells us that \(\max_{\Vert \tilde{x} \Vert = 1} |\tilde{a}^T\tilde{x}|\) is obtained at \(\tilde{x} = \pm \tilde{a}\).
Therefore, for any \(x \in \{-1, 1\}^n\), \(\Vert Ax \Vert_\infty \) is \(\sqrt{k + 1}\) or \(\sqrt{k + 2}\), corresponding to the length of the matched prefix. The number of possible vectors \(x\) which agree (in the aforementioned manner) with \(a_i\) is given by either \(2^{n - k - 1} \cdot 2\) or \(2^{n - k - 2} \cdot 2\), (two choices for each zero entry, and one last doubling to account for negation). Then, we have that
\begin{align*}
    \beta(A) &= \frac{1}{2^n} \cdot \left[ \sqrt{k + 1} \cdot (2^{k + 1} - n) \cdot 2^{n - k} + \sqrt{k + 2} \cdot (2n - 2^{k + 1}) \cdot 2^{n - k - 1} \right]\\
    &= \sqrt{k + 1} \cdot \frac{2^{k + 1} - n}{2^k} + \sqrt{k + 2} \cdot \frac{2n - 2^{k + 1}}{2^{k + 1}}\\
    &= \left( 2\sqrt{k + 1} - \sqrt{k + 2} \right) + \frac{n}{2^k} \cdot \left(\sqrt{k + 2} - \sqrt{k + 1}\right)
\end{align*}
Replacing \(k\) as \(\lfloor \log_2(n) \rfloor\) recovers the desired expression.
\end{proof}
\subsection{Proof of Lemma 4}
\begin{proof}[\hypertarget{proof:lemma4}{Proof of Lemma \ref{lemma4}}] We may assume without loss of generality that $\|a\|_{\ell^2} = 1$.
    First, recallling that
    \begin{align*}
        \beta(a) = \E \norm{a^\top X}_{\infty} = \frac{1}{2^n} \sum_{x \in \SET{-1, 1}^n} |a^\top x|.
    \end{align*}
    we let $x$ be a Rademacher vector. It remains to prove $\E[|a^\top X|] \leq 1$.
    We start by showing that $\E[XX^\top] = \mbox{Id}_{n \times n}$. Notice that
    \begin{align*}
        \E[XX^\top]_{ij} = \E[X_iX_j] = \begin{cases}
            1 \quad i = j \\
            0 \quad \text{otherwise.}
        \end{cases}
    \end{align*}
    Now by Jensen's inequality,
    \begin{align*}
        \E[|a^\top X|] &= \E\left[\sqrt{(a^\top X)^2}\right] \leq \sqrt{\E[a^\top X X^\top a]} = \sqrt{a^\top \E[XX^\top] a} \\
        &= \sqrt{a^\top a} = 1
    \end{align*}
    since $\norm{a}_2 = 1$. This completes the proof.
\end{proof}

\subsection{Proof of Theorem 5}
We first start with a simple Lemma.
\begin{lemma}
    The set $A \subset \left\{-1,1\right\}^n$ maximizing
    \begin{align*}
       A \rightarrow  \left\| \sum_{a \in A} a \right\|_2
    \end{align*}
    has cardinality $\# A = 2^{n-1}$ and does not contain any antipodal points.
\end{lemma}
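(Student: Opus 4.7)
The plan is a local improvement argument: for any set $A$ that fails either conclusion of the lemma, I will exhibit a single-element modification of $A$ that strictly increases the objective $\|v(A)\|_2^2$, where $v(A):=\sum_{a\in A}a$. The only identity I will need throughout is
\[\|v(A)\pm a\|_2^2 \;=\; \|v(A)\|_2^2 \pm 2\langle v(A), a\rangle + \|a\|_2^2 \;=\; \|v(A)\|_2^2 \pm 2\langle v(A), a\rangle + n,\]
valid for any $a\in\{-1,1\}^n$ since such vectors have squared $\ell^2$-norm exactly $n$. The crucial consequence is that, for any $a\in\{-1,1\}^n$, at least one of the two sign choices yields a strict increase of at least $n$ in $\|v\|_2^2$, regardless of the sign of $\langle v(A),a\rangle$.

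First I would rule out antipodal pairs in any maximizer. Suppose $\{a,-a\}\subseteq A$; setting $B:=A\setminus\{a,-a\}$ one has $v(B)=v(A)$ because the pair cancels. Applying the identity above to $B$ shows that one of $B\cup\{a\}$ or $B\cup\{-a\}$ has $\|v\|_2^2$ strictly larger than $\|v(A)\|_2^2$, contradicting optimality. Since $\{-1,1\}^n$ partitions into $2^{n-1}$ antipodal pairs, this step alone forces $|A|\leq 2^{n-1}$ for every maximizer.

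Next I would establish the matching lower bound on cardinality. If an antipodal-pair-free $A$ has $|A|<2^{n-1}$, then some pair $\{a,-a\}$ is entirely disjoint from $A$; applying the identity now to $A$ itself, one of $A\cup\{a\}$ or $A\cup\{-a\}$ strictly exceeds $\|v(A)\|_2^2$, again contradicting optimality. Combining the two steps, a maximizer must contain exactly one representative from each of the $2^{n-1}$ antipodal pairs, which is precisely the conclusion.

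The main observation, and essentially the only subtle point, is that the $+n$ coming from $\|a\|_2^2$ dominates the $\pm 2\langle v(A),a\rangle$ cross term in absolute value only after choosing the favorable sign, so adding any single cube vector with an appropriately chosen sign strictly improves the norm no matter what the current state of $A$ looks like. There is no real obstacle beyond noticing this; everything else is bookkeeping.
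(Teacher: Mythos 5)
Your proof is correct and follows essentially the same local-improvement argument as the paper: expand $\left\lVert v(A)\pm a\right\rVert_2^2$ and pick the favorable sign so the cross term is nonnegative, giving a strict gain of at least $n$. The only (harmless) difference is in the lower bound on the cardinality, where the paper passes to the complement set (which has the same objective since $\sum_{x\in\{-1,1\}^n}x=0$) and applies pigeonhole, whereas you directly adjoin one vector from an antipodal pair disjoint from $A$; both routes work.
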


\begin{proof}
  For the sake of contradiction, assume the optimal set $A$ contains a set of antipodal points $a,-a \in A$. Then, $A = U \cup \{-a, a\}$ for some set $U \subset \{-1, 1\}^n$. We now consider the term $\norm{\sum_{u \in U} u}^2$. By switching $a$ with $-a$ if necessary, we may without loss of generality assume that $\langle \sum_{u \in U}u , a \rangle \geq 0$. Then, by adding the strictly positive term $\norm{a}^2$ and the nonnegative term $2 \left\langle \sum_{u \in U}u , a \right\rangle$, we  obtain the strict inequality \begin{align*}
    \left\lVert \sum_{u \in U} u \right\rVert^2 <  \left\lVert \sum_{u \in U} u \right\rVert^2 + 2 \left\langle \sum_{u \in U}u , a \right\rangle + \left\lVert a\right\rVert^2 = \left\lVert \sum_{u \in U \cup \{a\}}u \right\rVert^2.
\end{align*}
  Finally, we note that $ \left\lVert \sum_{u \in A} u \right\rVert^2 =  \left\lVert \sum_{u \in U} u \right\rVert^2$ since $a$ and $-a$ cancel out in the sum. This shows the set $A$ is not optimal, a contradiction.
  Having established this, it is easy to see that $\# A = 2^{n-1}$. Indeed, for an optimal set $A$, if $\# A > 2^{n-1}$ the pigeonhole principle ensures two antipodal points, a contradiction. If $\# A < 2^{n - 1}$, we may apply the pigeonhole principle to the complement set $\{-1, 1\}^n \backslash A$ of size $2^n - \# A > 2^{n} - 2^{n - 1} > 2^{n - 1}$, which shares the same objective value as $A$, to determine it is non-optimal.
\end{proof}

\begin{proof}[\hypertarget{proof:thm5}{Proof of Theorem \ref{thm5}}]
Lemma 7 shows that it suffices to consider sets $A$ of size $2^{n-1}$ with no pair of antipodal points (if $a \in A$ then $-a \not\in A$). Working under such assumptions, we let $u = \sum_{a \in A} a$, expand the square of the norm, and use the linearity of the inner product:  \begin{align*}
  \norm{\sum_{a \in A} a}^2 = \sum_{a, b \in A} a^\top b  = \sum_{b \in A} \hspace{2pt} \left(\sum_{a \in A} a\right)^\top b = \sum_{b \in A} u^\top b.
   \end{align*}
   Next, we bound each term by its absolute value and add all the antipodal points into the sum, dividing the total expression by 2 to maintain equality:
   \begin{align*}
    \sum_{b \in A} u^\top b \leq \sum_{b \in A} \left | u^\top b \right |
   = \frac{1}{2} \sum_{b \in \{\pm 1\}^n} \left | u^\top b \right |.
   \end{align*}
   We furthermore assume that $u \neq 0$ since this case is trivially suboptimal. Thus, we normalize $u$ by multiplying and dividing by $\norm{u}$,
   \begin{align*}
    \frac{1}{2} \sum_{b \in \{\pm 1\}^n} \left |u^\top b \right | = \frac{\norm{u}}{2} \sum_{b \in \{\pm 1\}^n} \left |\frac{u}{\norm{u}}^\top b \right |
   \end{align*}
   Since we have a normalized vector and sum over all possible hypercube vectors, we may apply Theorem 4 to obtain
   \begin{align*}
    \frac{\norm{u}}{2} \sum_{b \in \{\pm 1\}^n} \left |\frac{u}{\norm{u}}^\top b \right |  \leq \frac{\norm{u}}{2} \sum_{b \in \{\pm 1\}^n} \left | e_i^\top b  \right |
   \end{align*}
   for all $1 \leq i \leq n$, where $e_i \in \R^n$ is the $i$th standard unit vector. Since $|e_i^\top b| = 1$ for all $b \in \{-1, 1\}^n$, the right hand side is simply
   \begin{align*}
       \frac{\norm{u}}{2} \sum_{b \in \{\pm 1\}^n} \left | e_i^\top b  \right |  = \frac{\norm{u}}{2} \cdot 2^{n} = \norm{u} 2^{n - 1}.
   \end{align*}
   It now suffices to divide both sides by $\norm{u} = \left\lVert \sum_{a \in A} a\right\rVert$ to obtain the upper bound $\norm{\sum_{a \in A} a} \leq 2^{n-1}$. Finally, we determine the optimal sets by asking when equality is achieved. By Theorem 6, we note that the inequalities become equalities if and only if $u/\norm{u} = e_i$. We see that $A = A_i = \{a \in \{-1, 1\}^n: a_i = 1\}$ satisfies
   \begin{align*}
       \frac{\sum_{a \in A} a}{\norm{\sum_{a \in A}a}} = e_i,
   \end{align*}
   and the proof is complete.
\end{proof}

\subsection{Proof of Proposition 1}

We thank an anonymous reviewer for suggesting the proof technique used below.
\begin{proof}[\hypertarget{proof:prop1}{Proof of Proposition \ref{prop1}}]
    Define the random variable
    \[
    \Phi = \begin{cases}
        1 & \text{ if } X \in A, \\
        -1 & \text{ if } X \in \bar{A}.
    \end{cases}
    \] Then $2 X \mathbf{1}_{X \in A} = \Phi X + X$. Since $X$ is mean-zero,
    \[
    2 \E[X \mathbf{1}_{X \in A}] = \E[\Phi X] := v.
    \] Then expanding yields
    \begin{align*}
        \big\|2 \mathbb{E}[X \mathbf{1}_{X \in A}]\big\|^2_2 &= \|\E[\Phi X]\|^2_2 = \innerprod{\E[\Phi X], v} = \E[\Phi \innerprod{X, v}] \\
        &\leq \E|\innerprod{X, v}| \leq \left(\E[\innerprod{X, v}^2]\right)^{1/2} && \text{(Jensen's inequality)} \\
        &= \|v\|_2 = \big\|2 \mathbb{E}[X \mathbf{1}_{X \in A}]\big\|_2.
 && \text{(since $X$ is isotropic)}
    \end{align*}
    Dividing both sides by $\|2\mathbb{E}[X \mathbf{1}_{X \in A}]\|_2$ then completes the proof.
\end{proof}

\subsection{Proof of Proposition 2}
\begin{proof}[\hypertarget{proof:prop2}{Proof of Proposition \ref{prop2}}]
    For $1 \leq p \leq \infty$ and $n \in \N$, we abbreviate
    \[\beta_{n,p} = \max_{A \in \R^{n \times n}} \frac{1}{2^n}\sum_{x \in \{-1, 1\}^n} \norm{Ax}_p =  \max_{A \in \R^{n \times n}} \E \left[\norm{AX}_p\right].\]
    We also note that $\norm{A}_F = \sqrt{\sum_{i,j} A_{ij}^2} = \sqrt{n}$ since $A$ has normalized rows.

    \textbf{The case $p=1$:}
    Taking $a_i$ to be the $i$th row of $A$, we have that,
    \begin{align*}
            \frac{1}{2^n}\sum_{x \in \{-1, 1\}^n} \norm{Ax}_1 =  \frac{1}{2^n}\sum_{x \in \SET{\pm 1}^n} \sum_{i=1}^n \left |a_i^\top x \right | = \sum_{i=1}^n \E \left[|a_i^\top X|\right] \leq n
    \end{align*}
    since $\E[|a_i^TX|] \leq 1$ when $x$ is a vector with $\pm 1$ random entries with equal probability, and $|a_i| = 1$ by Theorem 2.3 (the $1 \times n$ case). The identity matrix achieves this, since $\norm{x}_1 = n$ for every $x \in \SET{\pm 1}^n$.

    \textbf{The case $p=2$:}
    Again by Jensen's inequality, we have that\begin{align*}
        \E \left[\norm{AX}_2\right] &= \E\left[\sqrt{X^\top A^\top A X}\right] \leq \sqrt{\E[X^\top A^\top A X]}.
    \end{align*}
    Then, since if $b \in \R$ then we can treat $b$ as a $1 \times 1$ matrix and have $b = \mathrm{Tr}(b)$, yielding
    \begin{align*}
        \sqrt{\E[X^\top A^\top A X]} &= \sqrt{\E[\mathrm{Tr}(X^\top A^\top AX)]} \\
        &= \sqrt{\E[\mathrm{Tr}(A^\top A XX^\top)]} &&\text{(since } \mathrm{Tr}(ABC) = \mathrm{Tr}(CAB)) \\
        &= \sqrt{\mathrm{Tr}(A^\top A \, \E[XX^\top])} &&\text{(linearity of expectation) } \\
        &= \sqrt{\mathrm{Tr}(A^\top A)} = \sqrt{\mathrm{Tr}(AA^\top)} &&\text{(since } \E[XX^\top] = I)
    \end{align*}
    The entry of $A A^\top$ at position $(i, i)$ is just $a_i^\top a_i = \sum_j A_{ij}^2$ if $a_i$ is the $i$th row of $A$. The sum of all these is then seen to be $\sum_{i,j} A_{ij}^2 = n$ as we showed above. This shows that,
    \begin{align*}
        \E[\norm{AX}_2] \leq \sqrt{n}
    \end{align*}
    Now we shall show that if $A$ is orthogonal then the above becomes an equality. Recall that Jensen's inequality becomes an equality if the random variable is constant. When $A$ is orthogonal, we have that $\norm{Ax}_2^2 = x^\top A^\top A x=x^\top x = n$. Thus in this case $x^\top A^\top Ax$ is constant and the above becomes an equality.
\end{proof}
\subsection{Proof of Theorem 6}
\begin{proof}[\hypertarget{proof:thm6}{Proof of Theorem \ref{thm6}}]

We thank an anonymous reviewer for suggesting an improved proof technique that appears below.

\textbf{Upper bounds.} The simplest case is when $p \geq \log n$,
 which follows immediatelty after observing $n^{1/p} \leq n^{1/\log n} = e$ and using Steinerberger's asymptotic result on $\left\lVert AX \right\rVert _ \infty$ \cite{steinerberger}, which gives  \begin{align*}
  \|AX\|_p \leq n^{1/p}\|AX\|_{\infty} \lesssim \|AX\|_{\infty} \lesssim \sqrt{\log n}.
 \end{align*}
 The case $2 < p \leq \log n$ may be tackled by Khintchine's inequality \cite{haagerup},
 \begin{align*}
     \E |a^\top X|^p \leq \frac{2^{p/2}}{\sqrt{\pi}} \Gamma \left(\frac{p+1}{2}\right) \leq 2^{p/2} (p/2)!
 \end{align*}
 Thus, by Jensen's inequality and Stirling's approximation,
 \begin{align*}
     \E \left[ \norm{AX}_p \right]   &\leq    \left( \sum_{i=1}^n \E|a_i^\top X| \right) ^{1/p}
     \leq \left(n \cdot 2^{p/2}(p/2)!\right)^{1/p} \\
     &\leq C n^{1/p} (p/2)!^{1/p} \leq Cn^{1/p} \frac{(p/2)^{1/2 + 1/p}}{e^{1/2-1/p}} \leq C n^{1/p}(e/2)^{1/p} p^{1/p} p^{1/2} \\
     &\leq Cp^{1/2} n^{1/p},
 \end{align*}
 for some constant $C$ independent of $p$ (which we allow to change from line to line; in this case the constant changed 3 times in 1 line).

\textbf{Lower bounds}. We consider the case where a Rademacher-valued vector $X \in \left\{-1, 1 \right\}^n$ is fixed and the randomness instead comes from the matrix, which has independent and uniformly random $\pm 1/\sqrt{n}$ entries.
 The case $p \geq \log n$ is again straightforward using Steinerberger's asymptotic bound, \begin{align*}
  \mathbb{E} \left\lVert AX \right\rVert _p \geq \mathbb{E} \left\lVert AX \right\rVert _\infty \gtrsim \sqrt{\log n}.
 \end{align*}
 To improve this bound in the case $p \leq \log n$, partition the set $\left\{1, \dots, n\right\}$ into $n/e^p$ disjoint sets $I_j$ of cardinality approximately $e^p$. By concavity of $x \mapsto x^{1/p}$, we apply Jensen's inequality, \begin{align*}
  \mathbb{E} \left\lVert AX \right\rVert _p &\geq \left(\sum_{j = 1}^{n/e^p} \mathbb{E} \sum_{i = 1}^{e^p} \left | \langle a_i, X \rangle \right |^p  \right)^{1/p}  \\
  &\geq \left(\sum_{j = 1}^{n/e^p} \mathbb{E} \max_{i \in I_j} \left | \langle a_i, X \rangle \right |^{p}  \right)^{1/p}
 \end{align*}
 where the second inequality comes from replacing the sum with the maxima. For a fixed $j$ we may apply Steinerberger's asymptotic lower bound again to obtain $\mathbb{E} \max_{i \in I_j} \left | \langle a_i, X \rangle \right |^{p} \gtrsim \sqrt{\log \left | I_j \right | } \gtrsim \sqrt{p}$, hence
 \begin{align*}
  \mathbb{E} \left\lVert AX\right\rVert _p \gtrsim \left(n/e^p\right)^{1/p} \sqrt{p} \gtrsim n^{1/p} \sqrt{p}
 \end{align*}
 Having obtained the desired bounds for this case, we conclude the proof using Fubini-Tonelli:
 \begin{align*}
  \max_A \mathbb{E} \left\lVert AX \right\rVert _p
  \geq \mathbb{E}_{A} \hspace{2pt} \mathbb{E}_X \left\lVert AX \right\rVert _p = \mathbb{E}_{X} \hspace{2pt} \mathbb{E}_A \left\lVert AX \right\rVert _p  \gtrsim \begin{cases}
    \sqrt{p} n^{1/p} & \textrm{if } p \leq \log n \\
    \sqrt{\log n} & \textrm{if } p \geq \log n
  \end{cases}
 \end{align*}
 where $\mathbb{E}_A$ denotes the expectation with respect to the matrices with uniformly random entries in $\pm 1 / \sqrt{n}$, and $\mathbb{E}_X$ to the uniformly distributed Rademacher vectors.
\end{proof}

\end{document}